\pretocmd{\chapter}{\addtocontents{toc}{\protect\addvspace{15\p@}}}{}{}
\pretocmd{\section}{\addtocontents{toc}{\protect\addvspace{3\p@}}}{}{}
\def\@tocline#1#2#3#4#5#6#7{\relax
  \ifnum #1>\c@tocdepth 
  \else
    \par \addpenalty\@secpenalty\addvspace{#2}%
    \begingroup \hyphenpenalty\@M
    \@ifempty{#4}{%
      \@tempdima\csname r@tocindent\number#1\endcsname\relax
    }{%
      \@tempdima#4\relax
    }%
    \parindent\z@ \leftskip#3\relax \advance\leftskip\@tempdima\relax
    \rightskip\@pnumwidth plus4em \parfillskip-\@pnumwidth
    #5\leavevmode\hskip-\@tempdima
      \ifcase #1
       \or\or \hskip .5em \or \hskip 1em \else \hskip 1.5em \fi%
      #6\nobreak\relax
    \dotfill\hbox to\@pnumwidth{\@tocpagenum{#7}}\par
    \nobreak
    \endgroup
  \fi}
\DeclareSymbolFont{bbold}{U}{bbold}{m}{n}
\DeclareSymbolFontAlphabet{\mathbbold}{bbold}
\newcommand{\C}{\mathbb{C}}
\newcommand{\N}{\mathbb{N}}
\newcommand{\Z}{\mathbb{Z}}
\newcommand{\R}{\mathbb{R}}
\newcommand{\Q}{\mathbb{Q}}
\newcommand{\F}{\mathbb{F}}
\newcommand{\Gal}{\operatorname{Gal}}
\newcommand{\rk}{\operatorname{rk}}
\renewcommand{\ss}{\operatorname{ss}}
\newcommand{\red}{\operatorname{red}}
\newcommand{\End}{\operatorname{End}}
\newcommand{\GL}{\mathrm{GL}}
\newcommand{\GSp}{\mathrm{GSp}}
\def\sep{\mathrm{sep}}
\def\SS{\mathrm{SS}}
\def\bG{\mathbf{G}}
\def\bH{\mathbf{H}}
\def\bS{\mathbf{S}}
\def\bB{\mathbf{B}}
\def\bT{\mathbf{T}}
\def\bC{\mathbf{C}}
\def\bZ{\mathbf{Z}}
\def\bU{\mathbf{U}}
\def\bY{\mathbf{Y}}
\def\bpx{\begin{pmatrix}}
\def\epx{\end{pmatrix}}
\newcommand{\et}{\mathrm{\text{\'e}t}}
\newtheorem{thm}{Theorem}[section]
\newtheorem{cor}[thm]{Corollary}
\newtheorem{lemma}[thm]{Lemma}
\newtheorem{remark}[thm]{Remark}
\begin{document}

\title[]{On distribution of supersingular primes of abelian varieties and K3 surfaces}

\author{Chun-Yin Hui}
\email{chhui@maths.hku.hk, pslnfq@gmail.com}
\address{
Department of Mathematics\\
The University of Hong Kong\\
Pokfulam, Hong Kong 
}

\subjclass[2020]{11F80, 11G10, 11N45, 20G07}

\thanks{}

\begin{abstract}
Let $X$ be an abelian variety or a K3 surface defined over a number field $K$.
We prove that the density of the supersingular primes of $X$ is zero if $X$ is non-CM. 
By applying an effective Chebotarev density theorem of Serre,
we obtain asymptotic upper bounds of the counting function 
for these supersingular primes.
\end{abstract}

\maketitle

\section{Introduction}
Let $K$ be a number field, $\overline K$ be an algebraic closure of $K$, 
and $\Gal_K:=\Gal(\overline K/K)$ the absolute Galois group. 
Fix a prime number $\ell$.
Given $X$ an abelian variety 
or a K3 surface defined over $K$, let 
\begin{equation*}\label{repn}
\rho_\ell:\Gal_K\to\GL(V_\ell)\simeq \GL_n(\Q_\ell)
\end{equation*}
be the semisimple rational $\ell$-adic Galois representation of $K$,
where $V_\ell:=H^1_{\et}(X_{\overline K},\Q_\ell)$  if $X$ is an abelian variety 
and  $V_\ell:=H^2_{\et}(X_{\overline K},\Q_\ell)$ if $X$ is a K3 surface.
Denote by $\bG_\ell$ the \emph{algebraic monodromy group} of $\rho_\ell$, i.e., 
the Zariski closure 
of the image $\rho_\ell(\Gal_K)$ in $\GL_{n,\Q_\ell}$. 
We say that $X$ is of \emph{CM-type} if the identity component $\bG_\ell^\circ$ is abelian; 
such a condition is independent of $\ell$. 
We say that $X$ is \emph{non-CM} if it is not of CM-type.
By the Tate conjecture for divisors proven in \cite{Fa83},
an abelian variety $X/K$ is of CM-type if and only if
the endomorphism algebra $\End(X_{\overline K})\otimes\Q$ contains 
a $2\dim(X)$-dimensional semisimple commutative 
$\Q$-subalgebra.

Denote by $\Sigma_K$ the set of finite places of $K$
and by $\F_v$ the residue field at $v\in\Sigma_K$.
Let $S_X\subset \Sigma_K$ be the finite subset of places for $X$ to have bad reduction.
If $v\in\Sigma_K\backslash S_X$, we call $v$ a (good) \emph{supersingular} prime
of $X$ if 
the ``reduction mod $v$'' $X_v/\F_v$ (of $X$)  
is a supersingular abelian variety \cite{Oo74} or a supersingular K3 surface \cite{Ar74}.
Denote by $\SS_X$  the set of supersingular primes of $X/K$. 
For any set $Y$, denote its size 
 by $\#Y$. Given a subset $\mathcal P\subset\Sigma_K$, 
define the counting function 
$$\pi_{\mathcal P}(x):=\#\{v\in\mathcal P:~\#\F_v\leq x\}$$ for real $x$. The (natural)
 \emph{density} of $\mathcal P$ is defined as $\lambda\in\R$ if 
\begin{equation}\label{nden}
\pi_{\mathcal P}(x)=\lambda x/\log(x)+o(x/\log(x))\hspace{.1in}\text{as}~x\to\infty.
\end{equation}

For an elliptic curve $E/\Q$,
the density of its supersingular primes is 
zero (resp. $1/2$) if $E$ is non-CM \cite{Se98} (resp. CM \cite{De41}). 
Elkies proves that $E$ has infinitely many supersingular 
primes \cite{El87} (see also \cite{El89}).
When $E$ is non-CM, the Lang-Trotter's conjecture \cite{LT76} 
asserts the existence of a constant $C>0$ (depending on $E$) such that the counting function for supersingular primes satisfies
$$\pi_{\SS_E}(x)\sim C x^{1/2}/\log(x)\hspace{.1in}\text{as}~x\to\infty,$$
see \cite{Se81b},\cite{El91},\cite{FM96} for asymptotic 
upper and lower bounds of $\pi_{\SS_E}(x)$, and see \cite{Ka09} for 
Lang-Trotter's conjecture in the function field case.
For a non-CM abelian surface $A/K$, 
the density of its supersingular primes is zero  by \cite[Theorem 2.3]{Sa16}.
For a principally polarized abelian variety $A/\Q$ 
whose adelic Galois image  is open in $\GSp_{2g}(\hat\Z)$, 
there exists $\gamma>0$ (depending on $g=\dim A$) such that
$$\pi_{\SS_A}(x)=O(x/\log(x)^{1+\gamma-\epsilon})\hspace{.1in}\text{as}~x\to\infty$$
for all $\epsilon>0$,
and under the Generalized Riemann Hypothesis (GRH)\footnote{Here, GRH means any Dedekind zeta function of number field has no zeros $z\in\C$
such that $\mathrm{Re}(z)>1/2$.},
$$\pi_{\SS_A}(x)=O(x^{1-\frac{\gamma}{2}+\epsilon})\hspace{.1in}\text{as}~x\to\infty$$
for all $\epsilon>0$ by Cojocaru-Davis-Silverberg-Stange \cite[Theorem 1(iii)]{CDSS17}\footnote{
The constant $\gamma$ is $1/(2g^2-g+1)$ when $g\neq 2$ and is $1/8$ when $g=2$.
Note that for all sufficiently large prime $p$, 
the trace of $\rho_\ell(Frob_p)$ is zero if $p\in\SS_A$.}.

Motivated by the Lang-Trotter conjecture for elliptic curves, it is interesting to study
the distribution of 
the supersingular primes of an abelian variety 
(resp. a K3 surface) $X/K$, see the recent works \cite{CW23},\cite{Wa24} for certain abelian varieties. On the other hand, one asks if
there exists a finite field extension $L/K$ such that the density of the \emph{ordinary} primes of
the base change $X_{L}$ is one, see positive results \cite{Se98} for elliptic curves, \cite[$\mathsection2$]{Og82} (Katz-Ogus) and \cite{Sa16} for abelian surfaces, \cite{Tan99} and \cite{Pi98} for certain abelian varieties, \cite{BZ09} for K3 surfaces, and a survey \cite{ST25} on recent results concerning reductions of $X$.

In this article, we obtain some general results on the density and distribution of 
the supersingular primes of any abelian variety (resp. K3 surface) $X$ defined over 
any number field $K$ when $X$ is non-CM.

\begin{thm}\label{main1}
Let $X$ be an abelian variety or a K3 surface defined over a number field $K$.
If $X$ is non-CM, then the density of the supersingular primes of $X$ is zero.
\end{thm}

Denote by $\dim\bG_\ell$ (resp. $\rk\bG_\ell$) the dimension (resp. rank) of 
the algebraic monodromy group $\bG_\ell$. It follows from the Mumford-Tate conjecture  
that $\dim\bG_\ell$ is independent of $\ell$. This conjecture is open for abelian varieties and
is known for K3 surfaces \cite{Tan91,Tan95}.
It is also known that the connectedness and the rank of $\bG_\ell$ in Theorem \ref{main2}(ii) 
are independent of $\ell$ \cite{Se81a}.

\begin{thm}\label{main2}
Let $X$ be an abelian variety or a K3 surface defined over a number field $K$.
If $X$ is non-CM, then the following assertions hold for all $\epsilon>0$.
\begin{enumerate}[(i)]
\item  We have
$$\pi_{\SS_X}(x)=O(x/\log(x)^{1+ \frac{1}{\dim\bG_\ell}-\epsilon})\hspace{.1in}\text{as}~x\to\infty,$$
and under GRH, we have
$$\pi_{\SS_X}(x)=O(x^{1-\frac{1}{2\dim\bG_\ell}+\epsilon})\hspace{.1in}\text{as}~x\to\infty.$$
\item Suppose $\bG_\ell$ is connected. We have 
$$\pi_{\SS_X}(x)=O(x/\log(x)^{1+ \frac{\rk\bG_\ell-1}{\dim\bG_\ell}-\epsilon})\hspace{.1in}\text{as}~x\to\infty,$$
and under GRH, we have
$$\pi_{\SS_X}(x)=O(x^{1-\frac{\rk\bG_\ell-1}{2\dim\bG_\ell}+\epsilon})\hspace{.1in}\text{as}~x\to\infty.$$
\item The sum below  is convergent:
$$\sum_{v\in\SS_X} \frac{1}{\#\F_v}.$$
\end{enumerate}
\end{thm}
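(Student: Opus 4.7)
The strategy is to encode supersingularity at a prime $v$ as the condition that $\rho_\ell(\Frob_v)$ lies in a conjugation-invariant Zariski closed subvariety $C_\ell\subset\bG_\ell$ of positive codimension, and then to invoke Serre's effective Chebotarev density theorem for infinite Galois extensions. All three parts of the theorem reduce to controlling $\mathrm{codim}(C_\ell,\bG_\ell)$.

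Set $w=1$ if $X$ is an abelian variety and $w=2$ if $X$ is a K3 surface, and let $\chi_\ell:\bG_\ell\to\G_m$ denote the weight (similitude) character, normalized so that $\chi_\ell(\rho_\ell(\Frob_v))=\#\F_v$ for unramified $v$. Supersingularity at a good prime $v$ is equivalent to every eigenvalue of $\rho_\ell(\Frob_v)$ on $V_\ell$ having the form $(\#\F_v)^{w/2}\zeta$ for some root of unity $\zeta$. Accordingly, I would let $C_\ell\subset\bG_\ell$ be the Zariski closure of the conjugation-invariant locus of elements $g$ such that every eigenvalue of $g$ on $V_\ell$ has the form $\chi_\ell(g)^{w/2}\zeta$ for some root of unity $\zeta$, so that $v\in\SS_X\setminus S_X$ implies $\rho_\ell(\Frob_v)\in C_\ell(\Q_\ell)$.

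To compute $\mathrm{codim}(C_\ell,\bG_\ell)$, I would fix a maximal torus $T\subset\bG_\ell^\circ$ with weights $\alpha_1,\dots,\alpha_n\in X^*(T)$ of $V_\ell|_T$. On $T$ the supersingular condition forces each $2\alpha_i-w\chi_\ell$ to be a torsion character, so $T\cap C_\ell$ is contained (up to torsion) in the kernel of the homomorphism $T\to T'$ dual to the sublattice $\Lambda':=\langle 2\alpha_i-w\chi_\ell\rangle_{i}\subset X^*(T)$. The Weil pairing on $H^1$ (for abelian varieties) or the cup product on $H^2$ (for K3 surfaces) forces the centered weights $\alpha_i-(w/2)\chi_\ell$ to lie in a codimension-one subspace of $X^*(T)\otimes\Q$, giving $\rk\Lambda'\leq\rk\bG_\ell-1$. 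In the connected case, the non-CM hypothesis together with the classification of $\bG_\ell$ (Faltings and Zarhin for abelian varieties; Tankeev, Zarhin, and Andr\'e for K3 surfaces) yields the reverse inequality, so $\mathrm{codim}(C_\ell,\bG_\ell)=\rk\bG_\ell-1$. Without the connectedness hypothesis, non-CM yields only $C_\ell\neq\bG_\ell$, hence $\mathrm{codim}(C_\ell,\bG_\ell)\geq 1$.

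With the codimension in hand, parts (i) and (ii) follow from Serre's effective Chebotarev density theorem for infinite Galois extensions: for a conjugation-invariant Zariski closed subvariety of codimension $r$ in an $\ell$-adic algebraic monodromy group of dimension $d$, the number of unramified places $v$ with $\#\F_v\leq x$ and $\rho_\ell(\Frob_v)\in C_\ell$ is $O(x/\log(x)^{1+r/d-\epsilon})$ unconditionally and $O(x^{1-r/(2d)+\epsilon})$ under GRH. Specializing $(r,d)=(1,\dim\bG_\ell)$ yields (i) and $(r,d)=(\rk\bG_\ell-1,\dim\bG_\ell)$ yields (ii). Part (iii) follows from (i) by Abel summation,
\[
\sum_{v\in\SS_X,\,\#\F_v\leq x}\frac{1}{\#\F_v}\ll\frac{\pi_{\SS_X}(x)}{x}+\int_2^x\frac{\pi_{\SS_X}(t)}{t^2}\,dt,
\]
which remains bounded as $x\to\infty$ because $\int_2^\infty t^{-1}\log(t)^{-(1+\delta)}\,dt<\infty$ for any $\delta>0$. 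The main obstacle is the lower bound $\rk\Lambda'\geq\rk\bG_\ell-1$ in the connected case, which relies on the structural classification of $\bG_\ell$ and a careful verification that the non-CM hypothesis prevents the centered weights from degenerating into a sublattice of rank strictly less than $\rk\bG_\ell-1$.
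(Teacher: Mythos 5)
Your overall strategy --- encode supersingularity as landing in a conjugation-invariant closed subvariety of $\bG_\ell$, bound its codimension, and invoke Serre's effective Chebotarev theorem --- is the right one and is the paper's strategy. But the construction of $C_\ell$ has a genuine gap. You define $C_\ell$ as the Zariski closure of \emph{all} elements $g\in\bG_\ell$ whose eigenvalues on $V_\ell$ have the form $\chi_\ell(g)^{w/2}\zeta$ with $\zeta$ an arbitrary root of unity. For a maximal torus $T$, the set of $g\in T$ satisfying this condition is the preimage of the torsion subgroup of the quotient torus $T'$ dual to $\Lambda'$, and since torsion points are Zariski dense in any torus, this preimage is Zariski dense in $T$. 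Hence $T\subset C_\ell$, and by conjugation-invariance $C_\ell=\bG_\ell$. Your subvariety has codimension zero and the codimension count that the rest of the argument hinges on never gets off the ground.

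What is missing is a \emph{uniform} bound on the order of the roots of unity that actually occur for Frobenius elements. The paper restricts the set being closed up to the genuine Frobenii $f_v^{\ss}$ with $\bT_{f_v}=\G_m$, and then invokes \cite[Lemma 1.3(c)]{LP97} (due to Serre): there is a single $r\in\N$, depending only on $\rho_\ell$, such that $(f_v^{\ss})^r\in\bT_{f_v}$ for all $v$. This pushes the Frobenii into the bona fide closed subvariety $\bY=\{h\in\bH:h^r=1\}$ of the quotient $\bH=\bG_\ell^{\red}/\G_m$, whose intersection with any maximal torus is finite, and the orbit-dimension count then gives $\dim\bY\le\dim\bH-\rk\bH$. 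This is precisely the $r=\text{const}$ input your argument needs and does not supply. Separately, your appeal to the classification of $\bG_\ell$ to get $\rk\Lambda'\ge\rk\bG_\ell-1$ is both unverified (you flag it as the main obstacle) and unnecessary: since $\bG_\ell\hookrightarrow\GL(V_\ell)$ is faithful, the weights $\alpha_i$ span $X^*(T)\otimes\Q$, so the centered weights span a subspace of dimension $\ge\rk\bG_\ell-1$, matching the rank of the quotient $\bH$; this is a general structural fact, and the paper's Lemma \ref{lem} uses nothing beyond reductivity and non-abelianness of $\bH^\circ$. Part (iii) by Abel summation from part (i) is fine and is what \cite[Corollary 2 of Theorem 10]{Se81b} does.
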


The main tools for Theorems \ref{main1} and \ref{main2} are respectively, the
$\ell$-adic Galois representations of 
smooth projective varieties over global fields (see $\mathsection2$)
and an effective Chebotarev density theorem for $\ell$-adic Galois extensions \cite{Se81b} 
(see $\mathsection3$).
The method for Theorem \ref{main1} 
can be adapted to general smooth projective varieties $X$ defined over global fields 
by Theorem \ref{prop} and Corollary \ref{cor}, 
once  a suitable definition of supersingular reduction  of $X$ is available (see e.g., \cite{FL21} for hyperk\"ahler varieties).

\section{$\ell$-adic Galois representations}
\subsection{} 
Let $K$ be a global field, $K^{\sep}$ be a separable closure of $K$, and $\Gal_K:=\Gal(K^{\sep}/K)$
the absolute Galois group of $K$.
Let $X$ be a smooth projective variety defined over $K$
and $w\geq 0$ be an integer.
By considering the continuous action of $\Gal_K$ on 
the $\ell$-adic cohomology groups $V_\ell:=H^w_{\et}(X_{K^{\sep}},\Q_\ell)$
for all $\ell$ not equal to the characteristic of $K$,
we obtain a \emph{compatible system} 
\begin{equation}\label{cs}
\{\rho_\ell:\Gal_K\to\GL(V_\ell)\simeq \GL_{n}(\Q_\ell)\}_{\ell\neq \mathrm{Char}(K)}
\end{equation}
of $\ell$-adic representations by \cite{De74},
i.e., there exist a finite (exceptional set) $S_X\subset \Sigma_K$
and polynomials $\Phi_v(T)\in\Q[T]$ for each $v\in\Sigma_K\backslash S_X$ such that 
the following conditions hold.
\begin{enumerate}[(a)]
\item For each $\ell$, the representation $\rho_\ell$ is unramified outside $S_X$ and $S_\ell:=\{v\in\Sigma_K:~ v~\text{divides}~\ell\}$.
\item For each $\ell$ and $v\in\Sigma_K\backslash (S_X\cup S_\ell)$,
the characteristic polynomial $\det(\rho_\ell(Frob_v)-TI_n)$
is equal to $\Phi_v(T)$, where $\rho_\ell(Frob_v)$
denotes the image of the (geometric) Frobenius conjugacy class at $v$.
\end{enumerate}

If $\beta$ is a root of $\Phi_v(T)\in\Q[T]$ and $p_v$ denotes the residue characteristic of $v$,
then the compactness of $\Gal_K$ and condition (b) above imply that
$\beta$ is an $\ell$-adic unit for all prime $\ell$ not equal
to $p_v$.
Moreover, $\beta$ is a $\#\F_v$-Weil number of weight $w$ 
by the Weil conjecture proven by Deligne \cite{De74}, i.e., 
the complex absolute value $|i(\beta)|=(\#\F_v)^{w/2}$ for any embedding $i:\overline\Q\to \C$.
The representation $\rho_\ell$ is conjectured to be semisimple by Grothendieck-Serre \cite{Tat65}.
This conjecture is known when $X$ is an abelian variety by \cite{Tat66},\cite{Za75},\cite{Fa83} 
or a K3 surface by \cite{PSS75},\cite{De72},\cite{De80}.

The image of $\rho_\ell$ is a compact $\ell$-adic Lie subgroup of $\GL_n(\Q_\ell)$.
Let $\bG_\ell\subset\GL_{n,\Q_\ell}$ be the algebraic monodromy group of $\rho_\ell$,
$\bU_\ell$ be the unipotent radical of $\bG_\ell$, and 
$\bG_\ell^{\red}$ the reductive quotient $\bG_\ell/\bU_\ell$.
When $K$ is a number field, the Mumford-Tate conjecture \cite{Mu66},\cite{Se94}
implies that the identity component $\bG_\ell^\circ$ is independent of $\ell$.
It is known that the rank and semisimple rank of $\bG_\ell^{\red}$ are independent of $\ell$ \cite{Se81a},\cite{Hu13}. Moreover, if $(\bG_\ell^{\red})^\circ$ is abelian for one $\ell$, 
then this is true for all $\ell$ \cite[Chapter III]{Se98}.
When $K$ is a positive characteristic global field, the $\ell$-independence of $(\bG_\ell^{\red})^\circ$
is obtained in \cite{Ch04}.

\subsection{}
We first prove a lemma on the following $r$-\emph{unipotent subvariety} $\bY$ 
of some reductive group $\bH$, where $r\in\N$.

\begin{lemma}\label{lem}
Let  $\bH$ be a reductive group defined over a field $k$  
and $r$ be a positive integer.
If the identity component $\bH^\circ$ is non-abelian, then
the following assertions hold.
\begin{enumerate}[(i)]
\item The closed subvariety
$\bY:=\{h\in \bH: ~h^r~\text{is unipotent}\}$
is nowhere dense in $\bH$.
\item Let $\bY_1:=\{h\in \bH: ~h^r=1\}\subset \bY$ be
the $r$-torsion (closed) subvariety. 
If $\mathrm{Char}(k)=0$ and $\bH$ is connected, then 
$$\dim\bY_1\leq \dim\bH-\rk\bH.$$
\end{enumerate}
\end{lemma}

\begin{proof}
To prove the lemma, we may assume that $k$ is algebraically closed. 
If (i) is false, we obtain
$h\bH^\circ\subset \bY$ for some element $h$. There exist $h'\in h\bH^\circ$,
a maximal torus $\bT$ of $\bH$, and a Borel subgroup $\bB$ of $\bH$ containing $\bT$
such that the pair $(\bB,\bT)$ is invariant under conjugation by $h'$ (as an automorphism of $\bH^\circ$).
It follows that the sum of the positive co-roots 
(non-empty as the reductive group $\bH^\circ$ is not a torus) 
corresponding to the root datum of the triple $(\bH^\circ,\bB,\bT)$ is fixed by $h'$ under conjugation.
Then $h'$ commutes with some one-dimensional subtorus $\bT'$ of $\bT$ and we can
pick $t\in\bT'$ such that $t^r\neq 1$. Since $h't\in h\bH^\circ\subset \bY$,
it follows from the definition of $\bY$ that
$$u:=(h't)^r= (h')^r t^r$$
is unipotent. As the unipotent $(h')^r$ commutes with the semisimple $t^r$,
the uniqueness of the Jordan decomposition (\cite[Chapter 1, $\mathsection4$]{Bor91}) implies that $(h')^r=u$ and $t^r=1$,
which is a contradiction.

Let $\bT$ be a maximal torus of $\bH$.
When $\mathrm{Char}(k)=0$ and $\bH$ is connected, every element of $\bY_1$ is conjugate to an element 
of $\bT\cap\bY_1$ which is finite. Since the $\bH$-orbit of an element of $\bT$ (under conjugation)
has dimension bounded above by $\dim\bH-\dim\bT=\dim\bH-\rk\bH$,
we obtain (ii).
\end{proof}

\subsection{}\label{2.3}
Fix a representation $\rho_\ell$ in the compatible system \eqref{cs}.
We may take the exceptional set $S_X$ to be the places for $X$ to have bad reduction.  
 If $v\in\Sigma_K\backslash (S_X\cup S_\ell)$, then $\rho_\ell$ is unramified at $v$ 
and the conjugacy class
\begin{equation}\label{Frob}
\rho_\ell(Frob_v)\subset \bG_\ell(\Q_\ell)\subset\GL_n(\Q_\ell).
\end{equation} 
For $x\in \bG_\ell(\Q_\ell)$, 
denote by $x^{\ss}\in\bG_\ell(\Q_\ell)$ its semisimple part.
Let $f_v\in \rho_\ell(Frob_v)$.
Define the following $\Q_\ell$-subgroups of $\GL_{n,\Q_\ell}$.
\begin{itemize}
\item $\bS_{f_v}$: the smallest algebraic subgroup containing $f_v$.
\item $\bS_{f_v}^{\red}$: the smallest algebraic subgroup containing $f_v^{\ss}$.
\item $\bT_{f_v}$: the identity component of $\bS_{f_v}^{\red}$.
\item $\mathbb{G}_m$: the subgroup of homotheties in $\GL_{n,\Q_\ell}$.
\end{itemize}
The diagonalizable group $\bS_{f_v}^{\red}$ is the reductive quotient of $\bS_{f_v}$. 
The torus $\bT_{f_v}$ is called the \emph{Frobenius torus} at $f_v$ \cite{Se81a}.
Define the following conjugation-invariant closed $\Q_\ell$-subvarieties of $\bG_\ell$.
\begin{itemize}
\item $\bC$: the Zariski closure in $\bG_\ell$ of the subset
\begin{equation}\label{ssset}
\{ x\in \rho_\ell(\Gal_K):~x\in \rho_\ell(Frob_v),~~v\in\Sigma_K\backslash (S_X\cup S_\ell),~~
\text{and}~~\bT_{f_v}=\mathbb{G}_m\}.
\end{equation}
\item $\bC_1$: the Zariski closure in $\bG_\ell$ of the subset
\begin{equation}\label{ssset1}
\{ x\in \rho_\ell(\Gal_K):~x\in \rho_\ell(Frob_v),~~ x=x^{\ss},~~v\in\Sigma_K\backslash (S_X\cup S_\ell),~~
\text{and}~~\bT_{f_v}=\mathbb{G}_m\}.
\end{equation}
\end{itemize}
Note that \eqref{ssset} and \eqref{ssset1} are conjugation-invariant
subsets of the Galois image $\rho_\ell(\Gal_K)$ and $\bC_1$ 
is a subvariety of $\bC$.

\begin{thm}\label{prop}
Let $\rho_\ell$ be the $\ell$-adic representation of the global field $K$ 
attached to $H^w_{\et}(X_{K^{\sep}},\Q_\ell)$.
 If the identity component of 
$\bG_\ell^{\red}$ is non-abelian, then the following assertions hold.
\begin{enumerate}[(i)]
\item The above subvariety $\bC$ is nowhere dense in $\bG_\ell$.
\item If $\bG_\ell$ is connected, then $\dim\bC_1\leq \dim\bG_\ell-\rk\bG_\ell+1$.
\end{enumerate}
\end{thm}

\begin{proof}
It is known that $\mathbb{G}_m\subset \bG_\ell$ (noted by Deligne, see \cite{Se81a}).
Since the identity component of 
$\bG_\ell^{\red}=\bG_\ell/\bU_\ell$ is non-abelian, this is also true for the quotient
$$\bH:=\bG_\ell/(\bU_\ell \mathbb{G}_m).$$ 

 By \cite[Lemma 1.3(c)]{LP97} (due to Serre), there are 
finitely many possibilities (up to conjugation) for
the diagonalizable subgroup
$$\bS_{f_v}^{\red}\times\overline\Q_\ell\subset\GL_{n,\overline\Q_\ell}$$
when $v$ runs through all the primes in $\Sigma_K\backslash (S_X\cup S_\ell)$ 
and $f_v\in \rho_\ell(Frob_v)$.
Thus, there exists $r\in\N$ such that 
$$(f_v^{\ss})^r\in \bT_{f_v}=\bS_{f_v}^\circ$$
for all $v\in\Sigma_K\backslash (S_X\cup S_\ell)$ and $f_v\in \rho_\ell(Frob_v)$.
Hence, Jordan decomposition implies that 
the image of \eqref{ssset} in $\bH$ via the surjection
$$\pi:\bG_\ell\to\bG_\ell^{\red}\to \bH$$
 is contained in the subvariety $\bY$ in Lemma \ref{lem}(i),
which is nowhere dense in $\bH$. We obtain (i) since $\bC$ is contained in 
the preimage $\pi^{-1}(\bY)$, which is also nowhere dense in $\bG_\ell$.

If $\bG_\ell$ is connected, then Lemma \ref{lem}(ii) implies that
\begin{align}\label{long}
\begin{split}
\dim\bY_1\leq \dim\bH - \rk\bH&=(\dim\bG_\ell-\dim \bU_\ell - 1)-(\rk\bG_\ell-1)\\
&=\dim\bG_\ell-\dim\bU_\ell-\rk\bG_\ell.
\end{split}
\end{align}
Since $\bC_1\subset\pi^{-1}(\bY_1)$,
we obtain 
\begin{equation}\label{short}
\dim\bC_1\leq \dim\bY_1+\dim\bU_\ell+1.
\end{equation}
Hence, \eqref{long} and \eqref{short} imply (ii).
\end{proof}

\subsection{} Let $K$ be a global field and $\mathcal P\subset\Sigma_K$ be a subset.
In this section, the density of $\mathcal P$ means 
the natural density (see \eqref{nden}) if $\mathrm{Char}(K)=0$, and 
it means the Dirichlet density if $\mathrm{Char}(K)>0$, i.e., the density is $\lambda\in\R$ if 
$$\lim_{s\to 1^+}\frac{\sum_{v\in\mathcal P}\frac{1}{(\#\F_v)^s}}{\sum_{v\in\Sigma_K}\frac{1}{(\#\F_v)^s}}=\lambda.$$

We state the Chebotarev density theorem for global fields.

\begin{thm}\label{CDT}
Let $K$ be a global field and $L$ be a Galois field extension of $K$
that is unramified outside a finite $S\subset\Sigma_K$. Let $C$ be a conjugation 
invariant subset of $\Gal(L/K)$ (equipped with normalized Haar measure $\mu$)
such that the boundary of $C$ is of measure zero. Then the density of 
the subset
$$\{v\in\Sigma_K\backslash S: ~Frob_v\subset C\}\subset\Sigma_K$$
is equal to the measure $\mu(C)$.
\end{thm}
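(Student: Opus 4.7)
The plan is to reduce the statement to the classical finite Chebotarev density theorem by approximating the conjugation-invariant set $C$ by preimages of subsets in finite Galois quotients of $\Gal(L/K)$. This is the standard argument for passing from finite to (possibly) infinite Galois extensions.

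Let $\Gamma:=\Gal(L/K)$ and write $\Gamma=\varprojlim_n G_n$ as an inverse limit over finite quotients $G_n:=\Gal(L_n/K)$ attached to finite Galois subextensions $L_n/K$, each unramified outside $S$. The Haar measure $\mu$ is the inverse limit of the uniform probability measures on the $G_n$, and the cylinder sets $\pi_n^{-1}(A)$ (with $\pi_n:\Gamma\to G_n$ the projection and $A\subset G_n$) form a clopen basis for the profinite topology. First I would invoke the classical finite Chebotarev density theorem applied to each $L_n/K$: for any conjugation-invariant $A_n\subset G_n$, the density of primes $v\in\Sigma_K\setminus S$ whose image in $G_n$ lies in $A_n$ equals $|A_n|/|G_n|=\mu(\pi_n^{-1}(A_n))$. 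This is classical, proved via Artin $L$-functions in the number field case and via Dirichlet $L$-series in positive characteristic.

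Next I would exploit the hypothesis $\mu(\partial C)=0$ to sandwich $C$ between cylinder sets. Given $\epsilon>0$, by regularity of $\mu$ one picks an open $U\supset\overline{C}$ and a closed $F\subset C^\circ$ with $\mu(U\setminus F)<\epsilon$. Since $\overline{C}$ is compact and cylinder sets form a clopen basis, finitely many cylinders cover $\overline{C}$ inside $U$; taking their conjugation-closure yields a conjugation-invariant $A_n\subset G_n$ (for some $n$) with $\overline{C}\subset\pi_n^{-1}(A_n)\subset U$ and $\mu(\pi_n^{-1}(A_n))\leq \mu(C)+\epsilon$. Dually one produces a conjugation-invariant $B_m\subset G_m$ with $\pi_m^{-1}(B_m)\subset C^\circ$ and $\mu(\pi_m^{-1}(B_m))\geq\mu(C)-\epsilon$.

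Applying the classical finite Chebotarev theorem to both sandwich sets bounds the density of $\{v\in\Sigma_K\setminus S:\Frob_v\subset C\}$ above by $\mu(C)+\epsilon$ and below by $\mu(C)-\epsilon$; letting $\epsilon\to 0$ yields the conclusion. The main obstacle is the squeeze in this last step: because Dirichlet and natural density are only finitely additive rather than genuinely $\sigma$-additive, one must verify that the upper and lower densities of $\{v:\Frob_v\subset C\}$ coincide, and this is precisely where the hypothesis $\mu(\partial C)=0$ enters — it forces the cylinder approximations from above and below to converge to the same limit $\mu(C)$. Ramification of primes in the finite set $S$ is harmless, since $S$ has density zero.
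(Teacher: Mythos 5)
Your argument — reducing to the finite Chebotarev density theorem by sandwiching $C$ between conjugation-invariant cylinder sets coming from finite Galois quotients, with $\mu(\partial C)=0$ closing the squeeze — is exactly the approach the paper takes, by citing Serre's \emph{Lectures on $N_X(p)$} [Se12, \S 6.2.1] and noting in the accompanying remark that the finite case in positive characteristic holds with Dirichlet density, so the same approximation argument carries over. One small repair is needed in your construction of the outer cylinder: the conjugation-closure of the finitely many basic clopens covering $\overline C$ need not remain inside $U$, since $U$ itself is not assumed conjugation-invariant. Instead, having found a level $n$ and a clopen $W=\pi_n^{-1}(B_n)$ with $\overline C\subset W\subset U$, take $A_n:=\pi_n(\overline C)$; this is conjugation-invariant because $\overline C$ is conjugation-invariant and $\pi_n$ is a surjective group homomorphism, and it satisfies $\overline C\subset\pi_n^{-1}(A_n)\subset\pi_n^{-1}(B_n)\subset U$ because $\pi_n(\overline C)\subset B_n$. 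The inner approximation is then obtained for free by running this outer argument on the closed conjugation-invariant set $\Gal(L/K)\setminus C^\circ$ and passing to complements.
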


\begin{remark}
The proof when $K$ is a number field (using natural density) 
can be found in \cite[$\mathsection6.2.1$]{Se12}.
The key point is to show that the statement holds when $L/K$ is finite Galois.
Since this is true for global fields by using Dirichlet density,
the proof (in \cite[$\mathsection6.2.1$]{Se12}) 
can be carried to global fields in positive characteristic.
\end{remark}

\begin{cor}\label{cor}
Let $\rho_\ell$ be the $\ell$-adic representation of the global field $K$ 
attached to $H^w_{\et}(X_{K^{\sep}},\Q_\ell)$.
Define the conjugation invariant closed-analytic 
subset
\begin{equation}\label{Cset}
C:=\bC\cap \rho_\ell(\Gal_K)
\end{equation}
of $\rho_\ell(\Gal_K)$, where $\bC\subset\bG_\ell$ is the subvariety in $\mathsection\ref{2.3}$. 
If the identity component of 
$\bG_\ell^{\red}$ is non-abelian,
then the subset
\begin{equation}\label{Cprime}
\{v\in\Sigma_K\backslash (S_X\cup S_\ell):~\rho_\ell(Frob_v)\subset C\}\subset\Sigma_K
\end{equation}
is of density zero.
\end{cor}

\begin{proof}
We have $\rho_\ell(\Gal_K)=\Gal(L/K)$ for some Galois extension $L/K$
that is unramified outside the finite subset $S:=S_X\cup S_\ell\subset\Sigma_K$. 
Equip the compact $\ell$-adic Lie group $\rho_\ell(\Gal_K)$ with the normalized Haar measure $\mu$.
By the Chebotarev density theorem (Theorem \ref{CDT}),
it suffices to show that 
the conjugation invariant closed-analytic 
subset $C$ has measure zero.
Since the identity component of $\bG_\ell^{\red}$ is non-abelian, 
$C$ contains no interior point of $\rho_\ell(\Gal_K)$ by Theorem \ref{prop}(i). Therefore, 
we have $\mu(C)=0$ by \cite[Corollary 5.10]{Se12}.
\end{proof}

\subsection{}
We are now ready to prove Theorem \ref{main1}.
The smooth projective variety $X$ is a non-CM abelian variety 
or non-CM K3 surface defined over a number field $K$.
Recall that $\SS_X$ denotes the set of supersingular primes of $X/K$.

\vspace{.1in}

\noindent\textbf{Proof of Theorem \ref{main1}.}
When $X/K$ is an abelian variety (resp. a K3 surface),
 we take $w=1$ (resp. $w=2$) in Theorem \ref{prop}.
Since the $\ell$-adic representation $\rho_\ell$ is semisimple, the algebraic 
monodromy group $\bG_\ell$ is reductive. Moreover, any $f_v$ in \eqref{Frob}
is semisimple, i.e., $f_v=f_v^{\ss}$.
Since $X$ is non-CM, $\bG_\ell^\circ$ is non-abelian.
By Corollary \ref{cor}, it suffices
to show that $\SS_X\backslash S_\ell$ is a subset of \eqref{Cprime}.

Let $v\in\SS_X\backslash S_\ell$ 
be a supersingular prime and $\beta$ be a root
of the characteristic polynomial $\Phi_v(T)$. 
Then the fraction 
\begin{equation}\label{fraction}
\frac{\beta}{(\#\F_v)^{w/2}}\in\overline\Q
\end{equation} 
is a $p_v$-adic unit, where $p_v$ is the residue characteristic of $v$.
Since \eqref{fraction}
is also a Weil number of weight zero
and is an $\ell$-adic unit for all prime $\ell\neq p_v$, 
Kronecker's theorem implies that \eqref{fraction} 
is a root of unity. Hence, the Frobenius torus $\bT_{f_v}$
is $\mathbb{G}_m$ for all $v\in\SS_X\backslash S_\ell$.
We are done.\qed

\section{Effective Chebotarev density theorem}
Let $K$ be a number field and $L$ be a Galois extension of $K$ that is unramified 
outside a finite $S\subset\Sigma_K$.
Suppose $G:=\Gal(L/K)$ is a compact $\ell$-adic Lie group of dimension $N\geq 1$
and $C$ is a conjugation-invariant closed subset of $G$.
Define the counting function
$$\pi_C(x):=\#\{v\in\Sigma_K\backslash S:~Frob_v\subset C~\text{and}~\#\F_v\leq x\}$$
for real $x$.
We need the following effective Chebotarev density theorem 
for number fields\footnote{This relies on effective
versions of the Chebotarev density theorem \cite{LO77},\cite{LMO79}}
to prove Theorem \ref{main2}.

\begin{thm}\label{eCDT}\cite[Corollary 1 of Theorem 10]{Se81b}
Let $d$ be a real number such that $0\leq d<N$.
Suppose $\dim_M C\leq d$ (see the definition in \cite[$\mathsection3.1$]{Se81b}) and write $\alpha=(N-d)/N$. 
\begin{enumerate}[(i)]
\item For all $\epsilon>0$, we have
$$\pi_{C}(x)=O(x/\log(x)^{1+ \alpha-\epsilon})\hspace{.1in}\text{as}~x\to\infty.$$
\item Assume GRH. For all $\epsilon>0$, we have
$$\pi_{C}(x)=O(x^{1-\frac{\alpha}{2}+\epsilon})\hspace{.1in}\text{as}~x\to\infty.$$
\end{enumerate}
\end{thm}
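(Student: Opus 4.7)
The strategy is to reduce Theorem \ref{eCDT} to the classical effective Chebotarev density theorem for \emph{finite} Galois extensions of $K$, namely the bounds of Lagarias--Odlyzko \cite{LO77} in general and of Lagarias--Montgomery--Odlyzko \cite{LMO79} under GRH, via an $\ell$-adic approximation argument. Since $G$ is a compact $\ell$-adic Lie group of dimension $N$, one can choose a descending filtration $G=G_0\supset G_1\supset G_2\supset\cdots$ by open normal subgroups with $\bigcap_n G_n=\{1\}$ and $|G/G_n|\asymp \ell^{nN}$ (for instance, by taking suitable congruence subgroups arising from $\ell$-adic coordinates near the identity). Let $L_n\subset L$ be the fixed field of $G_n$: this is a finite Galois extension of $K$ with Galois group $G/G_n$, unramified outside $S$.

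The next step is to descend the closed set $C\subset G$ to its image $C_n:=\pi_n(C)\subset G/G_n$ under the projection $\pi_n\colon G\twoheadrightarrow G/G_n$. The definition of Minkowski dimension with respect to the $\ell$-adic metric on $G$ is designed precisely to provide, for every $\epsilon>0$, a covering bound of the form $\#C_n=O(\ell^{n(d+\epsilon)})$. Conjugation-invariance of $C$ descends to conjugation-invariance of $C_n$, so one may apply the finite-level effective Chebotarev density theorem to the pair $(L_n/K,\,C_n)$, yielding
\[
\pi_{C_n}(x)\ll \frac{\#C_n}{|G/G_n|}\,\frac{x}{\log x}+E(x,n),
\]
where $E(x,n)$ is the error term. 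Under GRH one has $E(x,n)=O\bigl(\ell^{cn}\,x^{1/2}\log(x\,\ell^n\,|d_K|)\bigr)$ thanks to \cite{LMO79}; unconditionally, \cite{LO77} supplies a weaker Linnik--Landau-type error. Since the condition $\Frob_v\subset C$ implies $\Frob_v\subset C_n$ for every $n$, one has $\pi_C(x)\le \pi_{C_n}(x)$.

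The final step is to choose $n=n(x)$ so as to balance the two contributions. Under GRH the constraint is essentially $\ell^{cnN}\,x^{1/2}\le x/\log x$, which forces $n\asymp \frac{\log x}{2N\log\ell}$ and, combined with $\#C_n/|G/G_n|=O(\ell^{-n(N-d-\epsilon)})$, produces the announced exponent $1-\alpha/2+\epsilon$ with $\alpha=(N-d)/N$. Without GRH the error decays too slowly to yield a polynomial saving in $x$, but letting $n\to\infty$ appropriately with $x$ (roughly $n\asymp c\log\log x$) still gains a factor of $\log(x)^{-\alpha+\epsilon}$ over the trivial $x/\log x$. The main obstacle I expect is the first passage: one must verify that $\dim_M C\le d$, which is defined analytically in terms of coverings of $C$ by $\epsilon$-balls, really translates into the combinatorial covering estimate $\#C_n=O(\ell^{n(d+\epsilon)})$ uniformly in $n$. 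This amounts to comparing the filtration $\{G_n\}$ with the canonical $\ell$-adic analytic structure on $G$ near the identity (via exponential/logarithm coordinates) and transporting by left translations. Once this uniform covering bound is secured, the remaining optimisation of $n=n(x)$ producing (i) and (ii) is routine.
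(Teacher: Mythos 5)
The paper gives no proof of this theorem: it is quoted directly as \cite[Corollary 1 of Theorem 10]{Se81b}, so there is no internal argument to compare against. Your sketch is, however, essentially a correct reconstruction of Serre's own proof there --- filter the compact $\ell$-adic Lie group by open normal subgroups $G_n$, translate the hypothesis $\dim_M C\le d$ into the covering bound $\#C_n=O(\ell^{n(d+\epsilon)})$ for the images in the finite quotients $G/G_n$, apply the finite-level effective Chebotarev theorem of \cite{LO77} (and \cite{LMO79} under GRH) to the extensions $L_n/K$, and optimize the choice of level $n=n(x)$ against the error term --- and the covering-bound step you flag as the main obstacle is indeed the technical heart of \cite[$\mathsection 3$]{Se81b}.
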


\noindent \textbf{Proof of Theorem \ref{main2}.}
Now $K$ is a number field. 
To apply Theorem \ref{eCDT}, we take 
\begin{itemize}
\item $G:=\rho_\ell(\Gal_K)=\Gal(L/K)$ 
(as in the proof of Corollary \ref{cor}),
\item $S:=S_X\cup S_\ell$,
\item $C:=\bC\cap \rho_\ell(\Gal_K)$ in \eqref{Cset},
\item $N:=\dim G$.
\end{itemize}
Recall that $\bG_\ell$ is reductive and $\bG_\ell^\circ$ is non-abelian as $X$ is non-CM.
Since  $f_v=f_v^{\ss}\in\rho_\ell(Frob_v)$
for $v\in\Sigma_K\backslash S$, 
the conjugation invariant $\Q_\ell$-subvarieties $\bC$ and $\bC_1$ in $\mathsection\ref{2.3}$
coincide. 
 
Since $G$ is open in $\bG_\ell(\Q_\ell)$ by \cite{Bog80},
we obtain $N=\dim\bG_\ell$. By \cite[Theorem 8]{Se81b},
we have $\dim_M C\leq \dim C$ (the dimension as an $\ell$-adic variety). 
Since we also have $\dim C\leq\dim\bC(\Q_\ell)\leq \dim\bC$\footnote{
The second inequality can be seen by first decomposing $\bC$ as a finite 
disjoint union of locally closed smooth
$\Q_\ell$-subvarieties $\bZ_i\subset\bG_\ell$ 
and then applying implicit function theorem at points of $\bZ_i(\Q_\ell)$.},
we obtain
$$\dim_M C\leq \dim\bC.$$

For assertion (i), we take $d:=\dim\bG_\ell-1\geq \dim\bC$ by Theorem \ref{prop}(i).
For assertion (ii), we take $d:=\dim\bG_\ell-\rk\bG_\ell+1\geq \dim\bC_1=\dim\bC$ by Theorem \ref{prop}(ii)
as $\bG_\ell$ is connected.
It follows that $0\leq d<N$ and $\dim_M C\leq d$. 
Since we have
$$\pi_{\SS_X}(x)\leq \#S_\ell+\pi_{\SS_X\backslash S_\ell}(x)\leq \#S_\ell+
 \pi_C(x)$$ for all $x$ (as $\SS_X\backslash S_\ell\subset \eqref{Cprime}$),
Theorem \ref{eCDT} implies assertions (i) and (ii).

Finally, assertion (iii) follows from assertion (i) and 
\cite[Corollary 2 of Theorem 10]{Se81b}. \qed

\vspace{.2in}

\section*{Acknowledgments}
I would like to thank Michael Larsen for inspiration.
I am grateful to Jean-Pierre Serre 
for suggesting to use the Dirichlet density for places of 
global fields in positive characteristic and a remark on applying \cite[Theorem 10]{Se81b}.
I am grateful to Yuri Zarhin for introducing \cite{BZ09} and some comments.
I thank Anna Cadoret, Kazuhiro Ito, Yuk-Kam Lau, Max Lieblich, Ben Moonen, and 
Matthias Sch\"utt for their comments.

Hui was partially supported by Hong Kong RGC (no. 17314522), NSFC (no. 12222120), and a Humboldt Research Fellowship.

\vspace{.1in}

\begin{thebibliography}{BLGGT14}
\bibitem[Ar74]{Ar74}
Michael Artin:
Supersingular K3 surfaces,\textit{ Ann. Sci. \'Ecole Norm. Sup.} (4) 7, 543--567 (1974).

\bibitem[Bog80]{Bog80}
Fedor A. Bogomolov: Sur l'alg\'ebricit\'e des repr\'esentations $\ell$-adiques, \textit{C.R.A.S.} \textbf{290} (1980), 701-703.

\bibitem[Bor91]{Bor91}
Armand Borel:
Linear Algebraic Groups, Graduate Texts in Mathematics, 126 (2nd ed.), Springer-Verlag 1991.

\bibitem[BZ09]{BZ09}
Fedor A. Bogomolov and Yuri G. Zarhin
Ordinary reduction of K3 surfaces,
\textit{Cent. Eur. J. Math.}, 7 (2) (2009), pp. 206-213.

\bibitem[CDSS17]{CDSS17}
Alina Carmen Cojocaru, Rachel Davis, Alice Silverberg, and Katherine E. Stange:
 Arithmetic properties
of the Frobenius traces defined by a rational abelian variety (with two appendices by J.-P. Serre), 
\textit{IMRN}, (12):3557--3602, 2017.

\bibitem[CW23]{CW23}
Alina Carmen Cojocaru and  Tian Wang: Bounds for the distribution of the Frobenius traces associated to a generic abelian variety, preprint, 2023. 


\bibitem[Ch04]{Ch04}
Chee Whye Chin:
	Independence of $\ell$ of monodromy groups,
	\textit{J.A.M.S.}, Volume \textbf{17}, Number 3, 723-747.
	
\bibitem[De41]{De41}
Max Deuring:
Die Typen der Multiplikatorenringe elliptischer Funktionenk\"orper, Abh. Math. Sem. Hansischen Univ. \textbf{14},
1941, 197--272.

\bibitem[De72]{De72} 
  Pierre Deligne: 
La conjecture de Weil pour les surfaces K3,
\textit{Inventiones Math.} \textbf{15}, 206--226 (1972).

\bibitem[De74]{De74} 
  Pierre Deligne: 
  La conjecture de Weil I, 
  \textit{Publ. Math. I.H.E.S.}, \textbf{43} (1974), 273-307.
	
\bibitem[De80]{De80} 
  Pierre Deligne: 
  La conjecture de Weil II, 
  \textit{Publ. Math. I.H.E.S.}, \textbf{52} (1980), 137-252.	

\bibitem[El87]{El87}
Noam D. Elkies:
The existence of infinitely many supersingular primes
for every elliptic curve over $\Q$, \textit{Invent. Math.} \textbf{89} (1987), no. 3, 561--567.


\bibitem[El89]{El89}
Noam D. Elkies:
 Supersingular primes for elliptic curves over real
number fields, \textit{Compositio Math.} \textbf{72} (1989), no. 2, 165--172.

\bibitem[El91]{El91}
Noam D. Elkies:
Distribution of supersingular primes, Journ\'ees Arithm\'etiques (1989): Luminy, Ast\'erisque
No. 198-200, 1991, pp. 127--132.

\bibitem[Fa83]{Fa83} 
Gerd Faltings: 
Endlichkeitss$\ddot{\mathrm{a}}$tze f$\ddot{\mathrm{u}}$r abelsche Variet$\ddot{\mathrm{a}}$ten $\ddot{\mathrm{u}}$ber Zahlk$\ddot{\mathrm{o}}$rpern,  \textit{Invent. Math.} \textbf{73} (1983), no. 3, 349-366. 

\bibitem[FM96]{FM96}
\'E. Fouvry and M. R. Murty:
 On the distribution of supersingular primes, \textit{ Canadian Journal of Mathematics} \textbf{48}, no 1,
1996, 81--104.

\bibitem[FL21]{FL21}
Lie Fu and Zhiyuan Li:
Supersingular irreducible symplectic varieties,  
``Rationality of Algebraic Varieties'', Progress in Mathematics, Vol. 342, 191--244, (2021).

\bibitem[Hu13]{Hu13}
	Chun Yin Hui:
	Monodromy of Galois representations and equal-rank subalgebra equivalence,
	\textit{Math. Res. Lett.} \textbf{20} (2013), no. 4, 705--728.
	
\bibitem[Ka09]{Ka09}
Nicholas M. Katz: Lang-Trotter revisited, \textit{Bulletin of the American Mathematical Society} 
\textbf{46}, No. 3, 2009, pp. 413--457.

\bibitem[LMO79]{LMO79}
J. C. Lagarias, H. L. Montgomery, and A. M. Odlyzko:
 A bound for the least prime ideal in the Chebotarev
density theorem, \textit{Invent. Math.}, \textbf{54}(3):271--296, 1979.


\bibitem[LO77]{LO77}
J. C. Lagarias and A. M. Odlyzko:
 Effective versions of the Chebotarev density
theorem, in Algebraic Number Fields, edit. A. Fr\"ohlich, Academic Press, London, 1977,
409--464.

\bibitem[LT76]{LT76}
Serge Lang and Hale Trotter:
 Frobenius distributions in $\GL_2$-extensions,
Springer Lecture Notes in Mathematics 504, 1976.

\bibitem[LP97]{LP97} 
Michael	Larsen and Richard Pink: 
 	A connectedness criterion for $\ell$-adic Galois representations,
	\textit{Israel J. Math.} \textbf{97} (1997), 1--10.

\bibitem[Mu66]{Mu66} 
  David Mumford: 
	Families of abelian varieties, \textit{Algebraic Groups and Discontinuous
Subgroups (Boulder, CO, 1965)}, pp. 347-351, Proc. Sympos. Pure Math., Vol.
9, \textit{Amer. Math. Soc., Providence, RI}, 1966.

\bibitem[No97]{No97}
Rutger Noot:
Abelian varieties - Galois representation and properties of ordinary reduction, 
\textit{Compositio Math.}, 1995, \textbf{97}, 161--171.

\bibitem[No00]{No00}
Rutger Noot:
 Abelian varieties with $\ell$-adic Galois representation of Mumford's type, 
\textit{J. Reine Angew. Math.}, 2000, \textbf{519},
155-169.


\bibitem[Og82]{Og82}
Arthur Ogus:
Hodge Cycles and Crystalline Cohomology, Hodge Cycles, Motives, and Shimura Varieties, Lecture Notes in Mathematics, vol. 900, Springer-Verlag, Berlin-New York (1982), pp. 357--414.

\bibitem[Oo74]{Oo74}
Frans Oort:
Subvarieties of moduli spaces, \textit{Invent. Math.} \textbf{24} (1974), 95--119.

\bibitem[PSS75]{PSS75}
I. I. Piatetski-Shapiro and I. R. Shafarevich: 
The Arithmetic of K3 surfaces, \textit{Proc. Steklov
Inst. Math.} \textbf{132} (1975), 45--57.

\bibitem[Pi98]{Pi98}
Richard Pink:
$l$-adic algebraic monodromy groups, cocharacters, and the Mumford-Tate conjecture,
\textit{J. Reine Angew. Math.}, 495 (1998), pp. 187-237.


\bibitem[Sa16]{Sa16}
William F. Sawin:
Ordinary primes for Abelian surfaces
\textit{C. R. Math. Acad. Sci. Paris}, 354 (6) (2016), pp. 566--568.

\bibitem[Se81a]{Se81a}
Jean-Pierre Serre:
Letter to K. A. Ribet, Jan. 1, 1981, Oeuvres-Collected Papers IV, no. 133.

\bibitem[Se81b]{Se81b}
Jean-Pierre Serre:
  Quelques applications du th\'eor\`eme de densit\'ede Chebotarev, \textit{Inst. Hautes \'Etudes
Sci. Publ. Math.}, \textbf{54} (1981), 323--401.

\bibitem[Se94]{Se94}
Jean-Pierre Serre:
	Propri\'et\'es conjecturales des groupes de Galois motiviques et des repr\'esentations $l$-adiques. Motives (Seattle, WA, 1991), 377--400, Proc. Sympos. Pure Math., 55, Part 1, Amer. Math. Soc., Providence, RI, 1994. 
	
\bibitem[Se98]{Se98} 
Jean-Pierre Serre:
	Abelian $l$-adic representation and elliptic curves, Research Notes in Mathematics Vol. 7 (2nd ed.), \textit{A K Peters} (1998).
	
\bibitem[Se12]{Se12} 
Jean-Pierre Serre:
	Lectures on $N_X(p)$, Chapman \& Hall/CRC Research Notes in Mathematics 11,
CRC Press, Boca Raton, FL, 2012.



\bibitem[ST25]{ST25}
Ananth N. Shankar and Yunqing Tang: Reductions of abelian varieties and K3 surfaces, \textit{Journal of Number Theory}, \textbf{270} (2025) 122--166.

\bibitem[Tan91]{Tan91}
S.G. Tankeev: 
Surfaces of K3 type over number fields and the Mumford-Tate conjecture I, 
Math. USSR Izv. \textbf{37} (1991), no. 1, 191-208. 

\bibitem[Tan95]{Tan95}
S.G. Tankeev:
Surfaces of K3 type over number fields and the Mumford-Tate conjecture II,
Math. USSR-Izv.  \textbf{59} (1995), no. 3, 619-646.

\bibitem[Tan99]{Tan99}
S.G. Tankeev:
On the weights of the $\ell$-adic representation and arithmetic of Frobenius eigenvalues, 
\textit{Izv. Ross. Akad. Nauk Ser. Mat.}, 1999, 63, 185-224 (in Russian), Izv. Math., 1999, 63, 181-218.


\bibitem[Tat65]{Tat65}
John Tate:
Algebraic cohomology classes and poles of zeta functions, in Schilling, O. F. G., Arithmetical Algebraic
Geometry (Proc. Conf. Purdue Univ., 1963), New York: Harper and Row, 1965, p. 93-110.

\bibitem[Tat66]{Tat66}
John Tate:
Endomorphisms of abelian varieties over finite fields, \textit{Invent. Math.}, 
\textbf{2} (1966), 134--144.



\bibitem[Wa24]{Wa24} Tian Wang: Distribution of supersingular primes for abelian surfaces, preprint, 2024.

\bibitem[Za75]{Za75}
Yu. G. Zarhin:
 Endomorphisms of abelian varieties over fields of finite characteristic, 
\textit{Izv. Akad. Nauk SSSR Ser. Math.} \textbf{39}, 1975, no.2, 272--277, 471.
	
	\end{thebibliography}
\end{document}